\def\R{{\mathbb R}}
\newtheorem{thm}{Theorem}[section]
\newtheorem{lem}[thm]{Lemma}
\theoremstyle{definition}
\newtheorem{de}[thm]{Definition}
\theoremstyle{remark}
\newtheorem{rem}[thm]{Remark}
\numberwithin{equation}{section}
\newcommand{\rmd}{{\rm d}}
\begin{document}

\title[automorphic solutions for SDE]{Square-mean almost
automorphic solutions for some stochastic differential equations}

\author{Miaomiao Fu}
\address{M. Fu: 1) College of Mathematics,
Jilin University, Changchun 130012, P. R. China \\
2) School of Mathematics, Changchun Normal College, Changchun
130032, P. R. China} \email{mmfucaathy@yahoo.com.cn}

\thanks{The second author is partially
supported by NSFC Grant 10801059, SRFDP Grant 20070183053, the 985
Program of Jilin University, and the science research fund at Jilin
University.}

\author{Zhenxin Liu}
\address{Z. Liu: College of Mathematics,
Jilin University, Changchun 130012, P. R. China}
\email{zxliu@jlu.edu.cn}

\date{}

\subjclass{60H25, 34C27, 34F05, 34G20}

\keywords{Almost automorphy, stochastic differential equations}

\begin{abstract}
The concept of square-mean almost automorphy for stochastic
processes is introduced. The existence and uniqueness of square-mean
almost automorphic solutions to  some linear and non-linear
stochastic differential equations are established provided the
coefficients satisfy some conditions. The asymptotic stability of
the unique square-mean almost automorphic solution in square-mean
sense is discussed.
\end{abstract}

\maketitle

\section{Introduction}
\setcounter{equation}{0} The concept of almost automorphy is a
generalization of almost periodicity. It is introduced by Bochner
\cite{Boc} in relation to some aspects of differential geometry.
Almost automorphic functions are characterized by the following
property. Let $f$ be a continuous function, given any sequence of
real numbers $\{s'_n\}$, we can extract a subsequence $\{s_n\}$ such
that for some function $g$
\begin{equation*}
g(t)=\lim_{n\rightarrow \infty}f(t+s_n)\quad\hbox{and}\quad
\lim_{n\rightarrow \infty}g(t-s_n)=f(t)
\end{equation*}
for each $t\in \mathbb{R}$. Almost automorphy has been studied by
many authors, see Veech \cite{Vee65,Vee77} for classical exposition;
see Johnson \cite{Joh}, Shen and Yi \cite{SY}, N'Gu\'er\'ekata
\cite{Gue01} for recent development, among others.

Recently, some authors study the almost periodic or pseudo almost
periodic solutions to stochastic differential equations, see
\cite{AT,BD,BD1,Pra95,Hal,Tud,Tudd}, among others. In this paper, we
go one step further by introducing the concept of square-mean almost
automorphic stochastic processes. Under some conditions of
coefficients, we establish existence and uniqueness of square-mean
almost automorphic solutions for some stochastic differential
equations.

The paper is organized as follows. In section 2, we introduce the
notion of square-mean almost automorphic processes and study some of
their basic properties. In sections 3 and 4, given some suitable
conditions, we prove the existence and uniqueness of square-mean
almost automorphic mild solutions to some linear and non-linear
stochastic differential equations, respectively. In section 5, we
discuss the asymptotic stability property of the unique square-mean
almost automorphic solution in square-mean sense.

\section{Square-mean almost automorphic processes}
\setcounter{equation}{0}

Throughout this paper, we assume that $(\mathbb{H}, \|\cdot\|)$ is a
real separable Hilbert space, $(\Omega, \mathcal{F}, \mathbf{P})$ is
a probability space, and $\mathcal{L}^2 (\mathbf{P},\mathbb{H})$
stands for the space of all $\mathbb{H}$-valued random variables $x$
such that
\begin{eqnarray*}
\mathbf{E}\|x\|^2=\int_{\Omega}\|x\|^2d\mathbf{P}<\infty.
\end{eqnarray*}
For $x\in\mathcal{L}^2 (\mathbf{P},\mathbb{H})$, let
\[
\|x\|_2:=\left(\int_{\Omega}\|x\|^2d\mathbf{P}\right)^{1/2}.
\]
Then it is routine to check that $\mathcal{L}^2
(\mathbf{P},\mathbb{H})$ is a Hilbert space equipped with the norm
$\|\cdot\|_2$.

\begin{de}
A stochastic process $X: \mathbb{R} \rightarrow
\mathcal{L}^2(\mathbf{P}, \mathbb{H})$ is said to be {\em
stochastically continuous} if
\begin{eqnarray*}
\lim_{t\rightarrow s} \mathbf{E}\|X(t)-X(s)\|^2
 = 0.
\end{eqnarray*}
\end{de}

\begin{de}
A stochastically continuous stochastic process $x:\mathbb{R}
\rightarrow \mathcal{L}^2(\mathbf{P}, \mathbb{H})$ is said to be
{\em square-mean almost automorphic}  if every sequence of real
numbers $\{s'_n\}$ has a subsequence $\{s_n\}$ such that for some
stochastic process $y:\mathbb{R} \rightarrow
\mathcal{L}^2(\mathbf{P}, \mathbb{H})$
\begin{equation*}
\lim_{n\rightarrow \infty}\mathbf{E} \|x(t+s_n) - y(t)\|^2=0
\quad\hbox{and}\quad \lim_{n\rightarrow \infty}\mathbf{E} \|y(t-s_n)
- x(t)\|^2=0
\end{equation*}
holds for each $t\in\mathbb R$. The collection of all square-mean
almost automorphic stochastic processes $x:\mathbb{R}\rightarrow
\mathcal{L}^2(\mathbf{P},\mathbb{H})$ is denoted by
$AA(\mathbb{R};\mathcal{L}^2(\mathbf{P},\mathbb{H}))$.
\end{de}

In the following lemma, we list some basic properties of square-mean
almost automorphic stochastic processes.
\begin{lem}\label{lem}
If $x$, $x_1$ and $x_2$ are all square-mean almost automorphic
stochastic processes, then
\begin{enumerate}
  \item $x_1+x_2$ is square-mean almost automorphic.
  \item $\lambda x$ is square-mean almost automorphic for every scalar
         $\lambda$.
  \item There exists a constant $M>0$ such that $\sup_{t\in
\mathbb{R}}\|x(t)\|_2 \leq M$. That is, $x$ is bounded in
$\mathcal{L}^2(\mathbf{P}, \mathbb{H})$.
\end{enumerate}
\end{lem}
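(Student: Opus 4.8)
The plan is to dispatch the three assertions in turn. Parts (1) and (2) are routine consequences of the definition together with the elementary inequality $\mathbf{E}\|u+v\|^2 \le 2\mathbf{E}\|u\|^2 + 2\mathbf{E}\|v\|^2$ valid in $\mathcal{L}^2(\mathbf{P},\mathbb{H})$, while (3) carries the only real content but still follows by a short contradiction argument.

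For (1): given an arbitrary sequence $\{s'_n\}$, I would first apply the definition to $x_1$ to extract a subsequence $\{s''_n\}$ and a process $y_1$ satisfying the two limit relations, and then apply the definition to $x_2$ \emph{along} $\{s''_n\}$ to extract a further subsequence $\{s_n\}$ and a process $y_2$. Along $\{s_n\}$ both $x_1$ and $x_2$ converge in the required sense, and $y_1+y_2$ does the job for $x_1+x_2$: estimate $\mathbf{E}\|(x_1+x_2)(t+s_n)-(y_1+y_2)(t)\|^2 \le 2\mathbf{E}\|x_1(t+s_n)-y_1(t)\|^2 + 2\mathbf{E}\|x_2(t+s_n)-y_2(t)\|^2 \to 0$, and symmetrically for $\mathbf{E}\|(y_1+y_2)(t-s_n)-(x_1+x_2)(t)\|^2$; stochastic continuity of $x_1+x_2$ follows from the same inequality. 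For (2): extract a subsequence $\{s_n\}$ and process $y$ for $x$, and note $\lambda y$ works for $\lambda x$ since $\mathbf{E}\|\lambda x(t+s_n)-\lambda y(t)\|^2 = |\lambda|^2\,\mathbf{E}\|x(t+s_n)-y(t)\|^2 \to 0$, and likewise for the reverse relation.

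For (3), I would argue by contradiction: if $\sup_{t\in\mathbb{R}}\|x(t)\|_2 = \infty$, choose $\{s'_n\}$ with $\|x(s'_n)\|_2 \ge n$. By square-mean almost automorphy there is a subsequence $\{s_n\}$ and a process $y$ with $\mathbf{E}\|x(t+s_n)-y(t)\|^2\to 0$ for every $t$; taking $t=0$ gives $\|x(s_n)-y(0)\|_2\to 0$, hence by the reverse triangle inequality $\bigl|\,\|x(s_n)\|_2 - \|y(0)\|_2\,\bigr| \le \|x(s_n)-y(0)\|_2 \to 0$, so $\|x(s_n)\|_2 \to \|y(0)\|_2 < \infty$. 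This contradicts $\|x(s_n)\|_2 \ge n \to \infty$, since $\{s_n\}$ is a subsequence of $\{s'_n\}$.

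The only point demanding care — and the step I would watch most closely — is the diagonal-extraction bookkeeping in (1): one must apply the definition \emph{sequentially}, to $x_1$ first and then to $x_2$ along the subsequence already produced, so that a single subsequence simultaneously governs both processes. Beyond that, every estimate is a direct application of the definition and elementary Hilbert-space inequalities, so I do not anticipate a genuine obstacle.
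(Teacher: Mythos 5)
Your proposal is correct and follows essentially the same route as the paper: the paper dismisses (1) and (2) as obvious (your sequential subsequence extraction is exactly the standard argument being suppressed) and proves (3) by the identical contradiction, extracting a subsequence from a sequence along which $\|x(s'_n)\|_2\to\infty$ and evaluating at $t=0$. Your use of the reverse triangle inequality just makes explicit the final step the paper states directly.
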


\begin{proof}
Since statements (1) and (2) are obvious, we only prove (3). If
$\sup_{t\in \mathbb{R}}\|x(t)\|_2=\infty$, then there exists a
sequence of  real numbers $\{s'_n\}$ such that
\begin{eqnarray*}
\lim_{n\rightarrow \infty}\|x(s'_n)\|_2^2=\lim_{n\rightarrow
\infty}\mathbf{E}\|x(s'_n)\|^2=\infty.
\end{eqnarray*}
Since $x\in AA(\mathbb{R};\mathcal{L}^2(\mathbf{P},\mathbb{H}))$,
there exists a subsequence $\{s_n\}\subset\{s'_n\}$ and a stochastic
process $y:\mathbb{R} \rightarrow \mathcal{L}^2(\mathbf{P},
\mathbb{H})$ such that
\begin{equation}\label{la}
\lim_{n\rightarrow
\infty}\mathbf{E}\|x(t+s_n)-y(t)\|^2=0\quad\hbox{for
all}~~t\in\mathbb R.
\end{equation}
In particular, when $t=0$ in \eqref{la}, it follows that
\begin{eqnarray*}
\lim_{n\rightarrow \infty}\mathbf{E}\|x(s_n)\|^2<\infty,
\end{eqnarray*}
a contradiction.
\end{proof}

\begin{thm}
$AA(\mathbb{R};\mathcal{L}^2(\mathbf{P},\mathbb{H}))$ is a Banach
space when it is equipped with the norm
\begin{eqnarray*}
\|x\|_{\infty}:=\sup_{t\in\mathbb R}\|x(t)\|_2=\sup_{t\in
\mathbb{R}}(E\|x(t)\|^2)^{\frac{1}{2}},
\end{eqnarray*}
for $x\in AA(\mathbb{R};\mathcal{L}^2(\mathbf{P},\mathbb{H}))$.
\end{thm}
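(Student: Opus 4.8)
The plan is to verify first that $AA(\mathbb{R};\mathcal{L}^2(\mathbf{P},\mathbb{H}))$ is a normed vector space and then that it is complete. The vector space structure and the finiteness of $\|x\|_\infty$ are already contained in Lemma \ref{lem}: parts (1) and (2) give closure under addition and scalar multiplication, and part (3) guarantees $\|x\|_\infty<\infty$. The norm axioms are then immediate from the corresponding properties of $\|\cdot\|_2$ on $\mathcal{L}^2(\mathbf{P},\mathbb{H})$; in particular $\|x\|_\infty=0$ forces $x(t)=0$ in $\mathcal{L}^2(\mathbf{P},\mathbb{H})$ for every $t$, i.e. $x$ is the zero process. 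So the substantive content is completeness.

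To prove completeness, I would take a Cauchy sequence $\{x^{(n)}\}$ in $AA(\mathbb{R};\mathcal{L}^2(\mathbf{P},\mathbb{H}))$. For each fixed $t$, $\{x^{(n)}(t)\}$ is Cauchy in the Hilbert space $\mathcal{L}^2(\mathbf{P},\mathbb{H})$, hence converges to some $x(t)\in\mathcal{L}^2(\mathbf{P},\mathbb{H})$; letting $m\to\infty$ in the Cauchy estimate shows $\sup_{t\in\mathbb{R}}\|x^{(n)}(t)-x(t)\|_2\to 0$, i.e. $x^{(n)}\to x$ uniformly on $\mathbb{R}$. Stochastic continuity of $x$ follows from a standard $\varepsilon/3$ argument using the uniform convergence and the stochastic continuity of each $x^{(n)}$. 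It remains to show $x\in AA(\mathbb{R};\mathcal{L}^2(\mathbf{P},\mathbb{H}))$, which is the heart of the matter.

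For this, given an arbitrary sequence $\{s'_k\}$, I would run a diagonal extraction: using that $x^{(1)}\in AA$, extract a subsequence along which $x^{(1)}(t+\,\cdot\,)$ converges (in square-mean, for each $t$) to some $y^{(1)}$; refine this subsequence for $x^{(2)}$ to get $y^{(2)}$, and so on; then take the diagonal sequence $\{s_k\}$, so that $\lim_{k\to\infty}\mathbf{E}\|x^{(n)}(t+s_k)-y^{(n)}(t)\|^2=0$ and $\lim_{k\to\infty}\mathbf{E}\|y^{(n)}(t-s_k)-x^{(n)}(t)\|^2=0$ for every $n$ and every $t$. Next observe that $\{y^{(n)}\}$ is Cauchy in $\|\cdot\|_\infty$, since
\[
\|y^{(n)}(t)-y^{(m)}(t)\|_2=\lim_{k\to\infty}\|x^{(n)}(t+s_k)-x^{(m)}(t+s_k)\|_2\le\|x^{(n)}-x^{(m)}\|_\infty,
\]
so $y^{(n)}\to y$ uniformly for some process $y:\mathbb{R}\to\mathcal{L}^2(\mathbf{P},\mathbb{H})$. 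Finally, a triple estimate
\[
\|x(t+s_k)-y(t)\|_2\le\|x(t+s_k)-x^{(n)}(t+s_k)\|_2+\|x^{(n)}(t+s_k)-y^{(n)}(t)\|_2+\|y^{(n)}(t)-y(t)\|_2,
\]
where the first and third terms are bounded by $\|x-x^{(n)}\|_\infty$ and $\|y^{(n)}-y\|_\infty$ (small for large $n$) and the middle term tends to $0$ as $k\to\infty$ for fixed $n$, yields $\lim_{k\to\infty}\mathbf{E}\|x(t+s_k)-y(t)\|^2=0$; the symmetric estimate gives $\lim_{k\to\infty}\mathbf{E}\|y(t-s_k)-x(t)\|^2=0$. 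Hence $x\in AA(\mathbb{R};\mathcal{L}^2(\mathbf{P},\mathbb{H}))$ and the space is complete.

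The main obstacle is exactly this last step: showing that the uniform limit $x$ inherits almost automorphy. The delicate point is that the limit processes $y^{(n)}$ obtained from each $x^{(n)}$ are produced by different (nested) subsequences, so one must set up the diagonal procedure carefully to obtain a single sequence $\{s_k\}$ that works simultaneously for all $n$, and then transfer convergence from the $x^{(n)}$ to $x$ via the uniform estimates on both $\{x^{(n)}\}$ and $\{y^{(n)}\}$. Everything else is routine.
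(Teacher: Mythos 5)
Your proposal is correct and follows essentially the same route as the paper's proof: pointwise limit plus uniform convergence from the Cauchy property, an $\varepsilon/3$ argument for stochastic continuity, a diagonal extraction producing one sequence $\{s_k\}$ valid for all $x^{(n)}$ simultaneously, the observation that the limit processes $y^{(n)}$ form a Cauchy family, and a final three-term estimate transferring almost automorphy to the limit. The only (harmless) cosmetic difference is that you derive uniform Cauchyness of $\{y^{(n)}\}$ directly from $\|x^{(n)}-x^{(m)}\|_\infty$, whereas the paper establishes pointwise Cauchyness via a three-term decomposition; both suffice for the concluding estimate.
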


\begin{proof}
By lemma \ref{lem},
$AA(\mathbb{R};\mathcal{L}^2(\mathbf{P},\mathbb{H}))$ is a vector
space, then it is easy to verify that $\|\cdot\|_\infty$ is a norm
on $AA(\mathbb{R};\mathcal{L}^2(\mathbf{P},\mathbb{H}))$. We only
need to show that
$AA(\mathbb{R};\mathcal{L}^2(\mathbf{P},\mathbb{H}))$ is complete
with respect to the norm $\|\cdot\|_\infty$. To this end, assume
that $\{x_n\}\subset
AA(\mathbb{R};\mathcal{L}^2(\mathbf{P},\mathbb{H}))$ is a Cauchy
sequence with respect to $\|\cdot\|_\infty$ and that $x$ is the
pointwise limit of $x_n$ with respect to $\|\cdot\|_2$, i.e.
\begin{equation}\label{eq}
\lim_{n\rightarrow \infty}\|x_n(t)-x(t)\|_2=0\quad\hbox{for each}~~
t\in \mathbb{R}.
\end{equation}
Note that this limit $x$ always exists by the completeness of
$\mathcal{L}^2(\mathbf{P},\mathbb{H})$ with respect to
$\|\cdot\|_2$. Since $\{x_n\}$ is Cauchy with respect to
$\|\cdot\|_\infty$, the convergence in \eqref{eq} is actually
uniform for $t\in\mathbb R$. We need to show that $x\in
AA(\mathbb{R};\mathcal{L}^2(\mathbf{P},\mathbb{H}))$.

Firstly, we verify that $x$ is stochastically continuous. In fact,
by
\begin{eqnarray*}
x(t+\bigtriangleup t)-x(t)=x(t+\bigtriangleup
t)-x_n(t+\bigtriangleup t)+x_n(t+\bigtriangleup
t)-x_n(t)+x_n(t)-x(t),
\end{eqnarray*}
it follows that
\begin{eqnarray*}
&&\mathbf{E}\|x(t+\bigtriangleup t)-x(t)\|^2\\
&\leq& 3\mathbf{E}\|x(t+\bigtriangleup t)-x_n(t+\bigtriangleup
t)\|^2+3\mathbf{E}\|x_n(t+\bigtriangleup
t)-x_n(t)\|^2+3\mathbf{E}\|x(t)-x_n(t)\|^2.
\end{eqnarray*}
By the uniform convergence of $x_n$ to $x$ with respect to
$\|\cdot\|_2$ and the stochastic continuity of $x_n$, the stochastic
continuity of $x$ follows.

Next, we prove that $x$ is square-mean almost automorphic. Let
$\{s'_n\}$ be an arbitrary sequence of real numbers, then by
standard diagonal progress, we can extract a subsequence
$\{s_n\}\subset\{s'_n\}$ such that for stochastic processes
$y_i:\mathbb{R} \rightarrow \mathcal{L}^2(\mathbf{P}, \mathbb{H})$
\begin{equation}\label{equ}
\lim_{n\rightarrow\infty} \mathbf{E}\|x_i(t+s_n)-y_i(t)\|^2=0
\end{equation}
for each $t\in \mathbb{R}$ and $i=1,2,\cdots$.

We observe that, for each $t\in\mathbb R$, the sequence of
$\{y_i(t)\}$ is a Cauchy sequence in
$\mathcal{L}^2(\mathbf{P},\mathbb{H})$. Indeed, if we write
\begin{eqnarray*}
y_i(t)-y_j(t)=y_i(t)-x_i(t+s_n)+x_i(t+s_n)-x_j(t+s_n)+x_j(t+s_n)-y_j(t),
\end{eqnarray*}
then we get
\begin{eqnarray*}
&&\mathbf{E}\|y_i(t)-y_j(t)\|^2\\&\le&3\mathbf{E}\|y_i(t)-x_i(t+s_n)\|^2
+3\mathbf{E}\|x_i(t+s_n)-x_j(t+s_n)\|^2+3\mathbf{E}\|x_j(t+s_n)-y_j(t)\|^2.
\end{eqnarray*}
By \eqref{eq} and \eqref{equ}, the sequence of $\{y_i(t)\}$ is
Cauchy.

Using the completeness of the space
$\mathcal{L}^2(\mathbf{P},\mathbb{H})$, we denote by $y(t)$ the
pointwise limit of $\{y_i(t)\}$. Let us prove now that
\begin{equation*}
\lim_{n\rightarrow\infty}
\mathbf{E}\|x(t+s_n)-y(t)\|^2=0\quad\hbox{and}\quad
\lim_{n\rightarrow\infty} \mathbf{E}\|y(t-s_n)-x(t)\|^2=0
\end{equation*}
for each $t\in\mathbb R$. Indeed, for each $i=1,2,\cdots$, we have
\begin{eqnarray*}
&&\mathbf{E}\|x(t+s_n)-y(t)\|^2\\&\le&3\mathbf{E}\|x(t+s_n)-x_i(t+s_n)\|^2
+3\mathbf{E}\|x_i(t+s_n)-y_i(t)\|^2+3\mathbf{E}\|y_i(t)-y(t)\|^2.
\end{eqnarray*}
By \eqref{eq} and \eqref{equ},
\begin{eqnarray*}
\lim_{n\rightarrow\infty}\mathbf{E}\|x(t+s_n)-y(t)\|^2=0.
\end{eqnarray*}
for each $t\in \mathbb{R}$.

We can use the same step to prove that
\begin{eqnarray*}
\lim_{n\rightarrow\infty}\mathbf{E}\|y(t-s_n)-x(t)\|^2=0
\end{eqnarray*}
for each $t\in \mathbb{R}$. That is, $x(t)$ is square-mean almost
automorphic. The proof is complete.
\end{proof}

\begin{de}
A function $f: \mathbb{R}\times \mathcal{L}^2(\mathbf{P},
\mathbb{H}) \rightarrow \mathcal{L}^2(\mathbf{P}, \mathbb{H})$,
$(t,x)\mapsto f(t,x)$, which is jointly continuous, is said to be
{\em square-mean almost automorphic} in $t\in \mathbb{R}$ for each
$x\in \mathcal{L}^2(\mathbf{P}, \mathbb{H})$ if for every sequence
of real numbers $\{s'_n\}$, there exists a subsequence $\{s_n\}$
such that for some function $\widetilde{f}$
\begin{equation*}
\lim_{n\rightarrow \infty}\mathbf{E} \|f(t+s_n,x) -
\widetilde{f}(t,x)\|^2 =0 \quad\hbox{and}\quad \lim_{n\rightarrow
\infty}\mathbf{E} \|\widetilde{f}(t-s_n,x)-f(t,x)\|^2 =0
\end{equation*}
for each $t\in \mathbb{R}$ and each $x\in \mathcal{L}^2(\mathbf{P},
\mathbb{H})$.
\end{de}

\begin{thm}\label{th}
Let $f: \mathbb{R}\times \mathcal{L}^2(\mathbf{P}, \mathbb{H})
\rightarrow \mathcal{L}^2(\mathbf{P}, \mathbb{H})$, $(t,x)\mapsto
f(t,x)$ be square-mean almost automorphic in $t\in \mathbb{R}$ for
each $x\in \mathcal{L}^2(\mathbf{P}, \mathbb{H})$, and assume that
$f$ satisfies Lipschitz condition in the follow sense:
\begin{eqnarray*}
\mathbf{E}\|f(t,x)-f(t,y)\|^2\leq L\mathbf{E}\|x-y\|^2
\end{eqnarray*}
for all $x,y\in \mathcal{L}^2(\mathbf{P}, \mathbb{H})$, and for each
$t\in \mathbb{R}$, where $L>0$ is independent of $t$. Then for any
square-mean almost automorphic process $x:\mathbb{R}\rightarrow
\mathcal{L}^2(\mathbf{P}, \mathbb{H})$,  the stochastic process
$F:\mathbb{R}\rightarrow \mathcal{L}^2(\mathbf{P}, \mathbb{H})$
given by $F(t):=f(t,x(t))$ is square-mean almost automorphic.
\end{thm}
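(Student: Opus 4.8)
The plan is to prove that $F(t)=f(t,x(t))$ is square-mean almost automorphic by combining the almost automorphy of $f$ in $t$ with the Lipschitz estimate, using a single subsequence extracted jointly. Fix an arbitrary sequence $\{s_n'\}$ of real numbers. First I would apply square-mean almost automorphy of $x$ to extract a subsequence (still denoted $\{s_n'\}$) and a process $y:\mathbb R\to\mathcal L^2(\mathbf P,\mathbb H)$ with $\mathbf E\|x(t+s_n')-y(t)\|^2\to 0$ and $\mathbf E\|y(t-s_n')-x(t)\|^2\to 0$ for each $t$. Then, applying square-mean almost automorphy of $f$ (which holds for every sequence, in particular for this one), I extract a further subsequence $\{s_n\}$ and a function $\widetilde f$ with $\mathbf E\|f(t+s_n,x)-\widetilde f(t,x)\|^2\to 0$ and $\mathbf E\|\widetilde f(t-s_n,x)-f(t,x)\|^2\to 0$ for each $t\in\mathbb R$ and each $x\in\mathcal L^2(\mathbf P,\mathbb H)$. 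The natural candidate limit process is $\widetilde F(t):=\widetilde f(t,y(t))$.

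Next I would estimate, for fixed $t$, the quantity $\mathbf E\|F(t+s_n)-\widetilde F(t)\|^2=\mathbf E\|f(t+s_n,x(t+s_n))-\widetilde f(t,y(t))\|^2$ by the triangle inequality in $\mathcal L^2(\mathbf P,\mathbb H)$ applied to the splitting
\[
f(t+s_n,x(t+s_n))-\widetilde f(t,y(t))
= \big[f(t+s_n,x(t+s_n))-f(t+s_n,y(t))\big]+\big[f(t+s_n,y(t))-\widetilde f(t,y(t))\big],
\]
giving
\[
\mathbf E\|F(t+s_n)-\widetilde F(t)\|^2 \le 2\mathbf E\|f(t+s_n,x(t+s_n))-f(t+s_n,y(t))\|^2 + 2\mathbf E\|f(t+s_n,y(t))-\widetilde f(t,y(t))\|^2.
\]
The first term is $\le 2L\,\mathbf E\|x(t+s_n)-y(t)\|^2\to 0$ by the Lipschitz hypothesis (uniform in the time argument) and the choice of $y$; the second term $\to 0$ by almost automorphy of $f$ applied at the fixed point $y(t)\in\mathcal L^2(\mathbf P,\mathbb H)$. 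Hence $\mathbf E\|F(t+s_n)-\widetilde F(t)\|^2\to 0$ for each $t$. The reverse limit $\mathbf E\|\widetilde F(t-s_n)-F(t)\|^2\to 0$ is obtained by the symmetric argument, splitting through $f(t-s_n,x(t))$ and using $\mathbf E\|\widetilde f(t-s_n,x(t))-f(t,x(t))\|^2\to 0$ together with $2L\,\mathbf E\|y(t-s_n)-x(t)\|^2\to 0$.

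It remains to check that $F$ is stochastically continuous, which I would verify separately: since $x$ is stochastically continuous and $f$ is jointly continuous, given $t_k\to t$ one has $x(t_k)\to x(t)$ in $\mathcal L^2(\mathbf P,\mathbb H)$, and then $f(t_k,x(t_k))\to f(t,x(t))$; combined with the almost automorphy just established, $F\in AA(\mathbb R;\mathcal L^2(\mathbf P,\mathbb H))$. The only subtle point — and the step I expect to need the most care — is the legitimacy of the \emph{joint} subsequence extraction: one must extract for $x$ first, then for $f$ along that already-thinned sequence, so that both limit relations hold simultaneously along the same $\{s_n\}$; and one must make sure the Lipschitz constant $L$ being independent of $t$ is exactly what allows the term $\mathbf E\|f(t+s_n,x(t+s_n))-f(t+s_n,y(t))\|^2$ to be controlled uniformly in $n$. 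Everything else is a routine application of the inequality $\|a+b\|_2^2\le 2\|a\|_2^2+2\|b\|_2^2$ in the Hilbert space $\mathcal L^2(\mathbf P,\mathbb H)$.
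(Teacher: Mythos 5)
Your proposal is correct and follows essentially the same route as the paper: extract a common subsequence along which both $x(t+s_n)\to y(t)$ and $f(t+s_n,\cdot)\to\widetilde f(t,\cdot)$, set $\widetilde F(t)=\widetilde f(t,y(t))$, split $F(t+s_n)-\widetilde F(t)$ through the intermediate term $f(t+s_n,y(t))$, and control the two pieces by the Lipschitz condition and the almost automorphy of $f$, respectively. Your added remarks on the order of the joint subsequence extraction and on the stochastic continuity of $F$ are points the paper glosses over, but they do not change the argument.
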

\begin{proof}
Let $\{s'_n\}$ be a sequence of real numbers. By almost automorphy
of ${f}$ and $x$, we can extract a subsequence $\{s_n\}$ of
$\{s'_n\}$ such that for some function $\widetilde{f}$ and for each
$t\in \mathbb{R}$ and $x\in \mathcal{L}^2(\mathbf{P}, \mathbb{H})$,
\begin{equation}\label{eq1}
\lim_{n \rightarrow
\infty}\mathbf{E}\|f(t+s_n,x)-\widetilde{f}(t,x)\|^2=0,
\end{equation}
and for some function $y$ and for each $t\in \mathbb{R}$,
\begin{equation}\label{eq2}
\lim_{n \rightarrow \infty}\mathbf{E}\|x(t+s_n)-y(t)\|^2=0.
\end{equation}

Let us consider the function $\widetilde{F}:\mathbb{R}\rightarrow
\mathcal{L}^2(\mathbf{P}, \mathbb{H})$ defined by
$\widetilde{F}(t):=\widetilde{f}(t,y(t))$, $t\in \mathbb{R}$. Note
that
\begin{eqnarray*}
F(t+s_n)-\widetilde{F}(t)=f(t+s_n,x(t+s_n))-f(t+s_n,y(t))+f(t+s_n,y(t))-\widetilde{f}(t,y(t)),
\end{eqnarray*}
so we have
\begin{eqnarray*}
&&\mathbf{E}\|F(t+s_n)-\widetilde{F}(t)\|^2\\
&\leq&2\mathbf{E}\|f(t+s_n,x(t+s_n))-f(t+s_n,y(t))\|^2+2\mathbf{E}\|f(t+s_n,y(t))-\widetilde{f}(t,y(t))\|^2\\
&\leq&2L\mathbf{E}\|x(t+s_n)-y(t)\|^2+2\mathbf{E}\|f(t+s_n,y(t))-\widetilde{f}(t,y(t))\|^2.
\end{eqnarray*}
We can deduce from \eqref{eq1} and \eqref{eq2} that
\begin{eqnarray*}
\lim_{n\rightarrow
\infty}\mathbf{E}\|F(t+s_n)-\widetilde{F}(t)\|^2=0,~~~~\mbox{for
each}~~~ t\in \mathbb{R}.
\end{eqnarray*}

Similarly we can prove that $\lim_{n\rightarrow
\infty}\mathbf{E}\|\widetilde{F}(t-s_n)-F(t)\|^2=0$ for each $t\in
\mathbb{R}$, which proves the square-mean almost automorphy of
$F(t)$.
\end{proof}

\section{The linear stochastic differential equations}
\setcounter{equation}{0} Consider the following linear stochastic
differential equation
\begin{equation}\label{dt}
\rmd x(t)=Ax(t)\rmd t+f(t)\rmd t+g(t)\rmd W(t),~~~~t \in \mathbb{R},
\end{equation}
where $A$ is an infinitesimal generator which generates a
$\mathcal{C}_0$-semigroup $(T(t)_{t\geq 0})$, such that
\begin{equation}\label{exp}
\|T(t)\| \leq Ke^{-\omega t},~~~~\mbox{for all}~~~ t \geq 0
\end{equation}
with $K>0$, $\omega>0$. And  $f:\mathbb{R} \rightarrow
\mathcal{L}^2(\mathbf{P}, \mathbb{H})$, $g:\mathbb{R} \rightarrow
\mathcal{L}^2(\mathbf{P}, \mathbb{H})$ are stochastic processes,
$W(t)$ is a two-sided standard one-dimensional Brown motion defined
on the filtered probability space $(\Omega,
\mathcal{F},\mathbf{P},\mathcal{F}_t)$, where $\mathcal
F_t=\sigma\{W(u)-W(v);u,v\le t\}$.

\begin{de}
An $\mathcal{F}_t$-progressively measurable process $\{x(t)\}_{t\in
\mathbb{R}}$ is called a {\em mild solution} of (\ref{dt}) if it
satisfies the corresponding stochastic integral equation
\begin{eqnarray*}
x(t)=T(t-a)x(a)+\int_a^tT(t-s)f(s)\rmd s+\int_a^tT(t-s)g(s)\rmd
W(s),
\end{eqnarray*}
for all $t\geq a$ and each $a\in \mathbb{R}$.
\end{de}
\begin{thm}\label{th1}
Given $f,g\in AA(\mathbb{R}; \mathcal{L}^2(\mathbf{P},\mathbb{H}))$,
(\ref{dt}) has a unique square-mean almost automorphic mild
solution.
\end{thm}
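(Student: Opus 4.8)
The plan is to build the solution from the variation-of-constants formula, integrated back to $-\infty$ so that the exponential decay \eqref{exp} makes the formula converge, and then to check in turn: well-definedness, that it solves the mild equation, square-mean almost automorphy, and uniqueness. First I would set
\[
x(t):=\int_{-\infty}^{t}T(t-s)f(s)\,\rmd s+\int_{-\infty}^{t}T(t-s)g(s)\,\rmd W(s),\qquad t\in\mathbb R.
\]
The first term is an $\mathcal L^2(\mathbf P,\mathbb H)$-valued Bochner integral, absolutely convergent since $\|T(t-s)f(s)\|_2\le Ke^{-\omega(t-s)}\|f\|_\infty$; for the second term the Itô isometry for the one-dimensional Brownian motion $W$ gives
\[
\mathbf E\Big\|\int_{-\infty}^{t}T(t-s)g(s)\,\rmd W(s)\Big\|^{2}=\int_{-\infty}^{t}\|T(t-s)g(s)\|_2^{2}\,\rmd s\le\frac{K^{2}}{2\omega}\,\|g\|_\infty^{2}<\infty .
\]
Hence $x(t)\in\mathcal L^2(\mathbf P,\mathbb H)$, $x$ is bounded in $\mathcal L^2(\mathbf P,\mathbb H)$, and $x(t)$ is $\mathcal F_t$-measurable; splitting the integrals at any $a\le t$ and using the semigroup law $T(t-s)=T(t-a)T(a-s)$ shows that $x$ satisfies the integral equation in the definition of a mild solution, and the bounds above also yield stochastic continuity of $x$.

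To prove that $x$ is square-mean almost automorphic, take a real sequence $\{s'_n\}$. By the almost automorphy of $f$ and of $g$ and a diagonal extraction, there are a subsequence $\{s_n\}$ and processes $\tilde f,\tilde g$ with $\mathbf E\|f(t+s_n)-\tilde f(t)\|^{2}\to0$, $\mathbf E\|\tilde f(t-s_n)-f(t)\|^{2}\to0$, and the analogous limits for $g,\tilde g$, for every $t$; since $f(t+s_n)\to\tilde f(t)$ in $\mathcal L^2(\mathbf P,\mathbb H)$ we have $\|\tilde f(t)\|_2\le\|f\|_\infty$ and, likewise, $\|\tilde g(t)\|_2\le\|g\|_\infty$. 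I would then set $y(t):=\int_{-\infty}^{t}T(t-s)\tilde f(s)\,\rmd s+\int_{-\infty}^{t}T(t-s)\tilde g(s)\,\rmd W(s)$ and substitute $s\mapsto s+s_n$ in $x(t+s_n)$ — for the stochastic term by passing to the shifted two-sided Brownian motion $\widetilde W(\cdot):=W(\cdot+s_n)-W(s_n)$, which leaves the Itô isometry valid and the $\mathcal L^2(\mathbf P,\mathbb H)$-norm unchanged — to obtain, via $\|a+b\|^2\le 2\|a\|^2+2\|b\|^2$,
\[
\begin{aligned}
\mathbf E\|x(t+s_n)-y(t)\|^{2}&\le 2\Big(\int_{-\infty}^{t}Ke^{-\omega(t-s)}\|f(s+s_n)-\tilde f(s)\|_2\,\rmd s\Big)^{2}\\
&\quad+2\int_{-\infty}^{t}K^{2}e^{-2\omega(t-s)}\|g(s+s_n)-\tilde g(s)\|_2^{2}\,\rmd s .
\end{aligned}
\]
Each integrand tends to $0$ pointwise in $s$ and is dominated by an integrable function — $2Ke^{-\omega(t-s)}\|f\|_\infty$ for the first, $4K^{2}e^{-2\omega(t-s)}\|g\|_\infty^{2}$ for the second — so dominated convergence gives $\mathbf E\|x(t+s_n)-y(t)\|^{2}\to0$ for each $t$, and interchanging $(f,g)$ with $(\tilde f,\tilde g)$ in the same estimate gives $\mathbf E\|y(t-s_n)-x(t)\|^{2}\to0$. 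Hence $x\in AA(\mathbb R;\mathcal L^2(\mathbf P,\mathbb H))$.

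For uniqueness, if $x_1,x_2$ are both square-mean almost automorphic mild solutions, then $z:=x_1-x_2$ satisfies $z(t)=T(t-a)z(a)$ for all $a\le t$, so $\|z(t)\|_2\le Ke^{-\omega(t-a)}(\|x_1\|_\infty+\|x_2\|_\infty)$ by Lemma \ref{lem}(3); letting $a\to-\infty$ forces $z\equiv0$, i.e.\ $x_1=x_2$. The step I expect to be the real obstacle is the square-mean almost automorphy of the candidate: the stochastic convolution has to be controlled under the time translation $t\mapsto t+s_n$, which is precisely why one changes variables to the shifted Brownian motion, so that the Itô isometry still applies and only the translation-insensitive $\mathcal L^2(\mathbf P,\mathbb H)$-distances of $f,\tilde f$ and of $g,\tilde g$ enter the estimate. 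Once this reduction is in place the dominated-convergence argument is routine, and both well-definedness and uniqueness drop out of the exponential bound \eqref{exp}.
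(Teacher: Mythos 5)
Your proposal is correct and, for the existence part, follows essentially the same route as the paper: the candidate solution is the convolution from $-\infty$, the drift term is handled by the exponential bound \eqref{exp}, and the stochastic convolution is handled by substituting $\sigma=s-s_n$, passing to the shifted Brownian motion $\widetilde W(\cdot)=W(\cdot+s_n)-W(s_n)$, and applying It\^o's isometry. Two genuine differences are worth recording. First, for the deterministic convolution $\int_{-\infty}^tT(t-s)f(s)\,\rmd s$ the paper simply cites N'Gu\'er\'ekata's deterministic result to get its almost automorphy, whereas you prove it directly by dominated convergence with the dominating function $2Ke^{-\omega(t-s)}\|f\|_\infty$; your version is self-contained and makes explicit where the uniform bound $\|\widetilde f(t)\|_2\le\|f\|_\infty$ is used. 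Second, your uniqueness argument is genuinely different and cleaner: you use only the uniform $\mathcal L^2$-boundedness of square-mean almost automorphic processes (Lemma \ref{lem}(3)) and let $a\to-\infty$ in $z(t)=T(t-a)z(a)$, while the paper instead sends $t\to+\infty$ to get $z(t)\to0$ and then invokes the two-sided recurrence in the definition of almost automorphy (choosing $s_n\to\infty$ so that $\tilde z\equiv0$ and hence $z\equiv0$). Both are valid; yours avoids the recurrence argument entirely. One shared subtlety (present in the paper as well, so not a gap relative to it): when you form $\mathbf E\|x(t+s_n)-y(t)\|^2$ you are subtracting a stochastic integral against $\widetilde W$ from one against $W$ and then applying the isometry to the combined integrand $g(\sigma+s_n)-\tilde g(\sigma)$; strictly speaking these are integrals with respect to two different Brownian motions, and the identification is only distributional. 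The paper performs exactly the same step, so your proof matches its level of rigor.
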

\begin{proof}
It is well known (see \cite{Arn}) that for given $a\in \mathbb{R}$
and given initial value $x_a$ at `time' $a$, the process
\begin{equation}\label{so}
x(t)=T(t-a)x_a+\int_a^tT(t-s)f(s)\rmd s+\int_a^tT(t-s)g(s)\rmd
W(s),\quad t\ge a
\end{equation}
is the unique mild solution to \eqref{dt} with the initial value
condition $x(a)=x_a$. So to prove the existence of square-mean
almost automorphic mild solution, we need to find an initial value
$x_a$ such that the stochastic process given by \eqref{so} is
square-mean almost automorphic.

Let $\{s'_n\}$ be an arbitrary sequence of real numbers. Since $f$
and $g$ are square-mean almost automorphic, there exists a
subsequence $\{s_n\}$ of $\{s'_n\}$  such that for certain
stochastic processes $\widetilde{f}$ and $\widetilde{g}$
\begin{equation*}
\lim_{n\rightarrow
\infty}\mathbf{E}\|f(t+s_n)-\widetilde{f}(t)\|^2=0,\qquad
\lim_{n\rightarrow
\infty}\mathbf{E}\|\widetilde{f}(t-s_n)-f(t)\|^2=0
\end{equation*}
and
\begin{equation*}
\lim_{n\rightarrow
\infty}\mathbf{E}\|g(t+s_n)-\widetilde{g}(t)\|^2=0,\qquad
\lim_{n\rightarrow
\infty}\mathbf{E}\|\widetilde{g}(t-s_n)-g(t)\|^2=0
\end{equation*}
hold for each $t\in \mathbb{R}$.

Now we consider $u(t):=\int_{-\infty}^tT(t-s)f(s)\rmd s$, defined as
$\lim_{r\rightarrow -\infty}\int_r^tT(t-s)f(s)\rmd s$. From
\cite[theorem 3.1]{Gue04}, we know that $\int_r^tT(t-s)f(s)\rmd s$
exists for each $r<t$. Moreover, if we let
$\widetilde{u}(t):=\int_{-\infty}^tT(t-s)\widetilde{f}(s)\rmd s$, we
have
\begin{eqnarray*}
u(t+s_n)\rightarrow \widetilde{u}(t)\quad\hbox{and}\quad
\widetilde{u}(t-s_n)\rightarrow u(t)\quad\hbox{in}~~~\mathcal
L^2(\mathbf P,\mathbb H),\quad\mbox{as}~~~ n\rightarrow \infty
\end{eqnarray*}
for each $t\in \mathbb{R}$. This indicates that $u$ is square-mean
almost automorphic. Note that $u(a)=\int_{-\infty}^aT(a-s)f(s)\rmd
s$. If $t\geq a$, then
\begin{eqnarray}
\int_a^tT(t-s)f(s)\rmd s&=&\int_{-\infty}^tT(t-s)f(s)\rmd s-\int_{-\infty}^aT(t-s)f(s)\rmd s\nonumber\\
&=&u(t)-T(t-a)u(a),\nonumber
\end{eqnarray}
i.e.,
\[
u(t)=T(t-a)x(a)+\int_a^tT(t-s)f(s)\rmd s.
\]
 If we choose initial value $x_a=u(a)$, then the process $x(t)$ given
by \eqref{so} is square-mean almost automorphic. In fact, denote
\[
\hat{u}(t):=T(t-a)x(a)+\int_a^tT(t-s)\widetilde{f}(s)\rmd s
\]
and
\begin{eqnarray*}
\tilde{x}(t):=T(t-a)x(a)+\int_a^tT(t-s)\widetilde{f}(s)\rmd
s+\int_a^tT(t-s)\widetilde{g}(s)\rmd W(s).
\end{eqnarray*}
Then, for each $t\in \mathbb{R}$, we have
\begin{eqnarray*}
&&\lim_{n\rightarrow \infty} \mathbf{E}\|x(t+s_n)-\tilde{x}(t)\|^2\\
&=&\lim_{n\rightarrow \infty}
\mathbf{E}\|T(t+s_n-a)x(a)+\int_a^{t+s_n}
T(t+s_n-s)f(s)\rmd s+\int_a^{t+s_n} T(t+s_n-s)g(s)\rmd W(s)\\
&&-T(t-a)x(a)-\int_a^t
T(t-s)\widetilde{f}(s)\rmd s-\int_a^t T(t-s)\widetilde{g}(s)\rmd W(s)\|^2\\
&=&\lim_{n\rightarrow \infty}
\mathbf{E}\|u(t+s_n)-\hat{u}(t)+\left(\int_{-\infty}^{t+s_n}-\int_{-\infty}^a\right)
T(t+s_n-s)g(s)\rmd W(s)\\
&&\quad -\left(\int_{-\infty}^t-\int_{-\infty}^a\right)
T(t-s)\widetilde{g}(s)\rmd W(s)\|^2.
\end{eqnarray*}
Let $\tilde{W}(\sigma):=W(\sigma+s_n)-W(s_n)$ for each
$\sigma\in\mathbb R$. Note that $\tilde{W}$ is also a Wiener process
and has the same distribution as $W$. Hence by making a changing
$\sigma=s-s_n$ and by the Ito's isometry property of stochastic
integral (see \cite[page 29]{Oks}, for example), we have
\begin{eqnarray*}
&&\mathbf{E}\|x(t+s_n)-\tilde{x}(t)\|^2\\
&\le&3\mathbf{E}\|u(t+s_n)-\hat{u}(t)\|^2 +3\mathbf{E}\left\|
\int_{-\infty}^tT(t-\sigma)[g(\sigma+s_n)-\widetilde{g}(\sigma)]\rmd\tilde{W}(\sigma)\right\|^2\\
&&+3\mathbf{E}\left\|\int_{-\infty}^aT(t-\sigma)[g(\sigma+s_n)-
\widetilde{g}(\sigma)]\rmd\tilde{W}(\sigma)\right\|^2\\
&\leq& 3\mathbf{E}\|u(t+s_n)-\hat{u}(t)\|^2 +3
\left[\int_{-\infty}^t\|T(t-\sigma)\|^2\mathbf{E}\|g(\sigma+s_n)-\widetilde{g}(\sigma)\|^2
\rmd\sigma\right]\\
&&+3\left[\int_{-\infty}^a\|T(t-\sigma)\|^2\mathbf{E}\|g(\sigma+s_n)-
\widetilde{g}(\sigma)\|^2\rmd\sigma\right].
\end{eqnarray*}
Noting that $g$ is square-mean almost automorphic and by the
exponential dissipation property (\ref{exp}) of $T(t)$ , we
immediately obtain that
\begin{eqnarray*}
\lim_{n\rightarrow \infty} \mathbf{E}\|x(t+s_n)-\tilde{x}(t)\|^2=0.
\end{eqnarray*}
And we can show in a similar way that
\begin{eqnarray*}
\lim_{n\rightarrow \infty} \mathbf{E}\|\tilde{x}(t-s_n)-x(t)\|^2=0
\end{eqnarray*}
for each $t\in \mathbb{R}$. Up to now, the existence is proved.

We finally prove the uniqueness of the square-mean almost
automorphic solution of \eqref{dt}. Assume that $x(t)$ and $y(t)$
are both square-mean almost automorphic solutions of \eqref{dt} with
different initial value $x(a)$ and $y(a)$ at `time' $a$. That is,
for $t\geq a$,
\begin{eqnarray*}
x(t)&=&T(t-a)x(a)+\int_a^tT(t-s)f(s)\rmd s+\int_a^tT(t-s)g(s)\rmd
W(s),\\
y(t)&=&T(t-a)y(a)+\int_a^tT(t-s)f(s)\rmd s+\int_a^tT(t-s)g(s)\rmd
W(s),
\end{eqnarray*}
and $x(a)\neq y(a)$. Let $z(t)=x(t)-y(t)$. Then $z(t)$ satisfies the
equation
\begin{eqnarray*}
\rmd z(t)=Az(t)\rmd t,\quad t\ge a
\end{eqnarray*}
with initial condition $z(a)=x(a)-y(a)$. Hence
\begin{eqnarray*}
z(t)=T(t-a)z(a)
\end{eqnarray*}
and
\begin{equation*}
\|z(t)\| \leq Ke^{-\omega (t-a)}\|z(a)\|,\quad\mbox{for all}~~~ t
\geq a.
\end{equation*}
So $z(t) \rightarrow 0$, as $t \rightarrow +\infty$. Since $z(t)\in
AA(\mathbb{R};\mathcal{L}^2(\mathbf{P},\mathbb{H}))$, for any
sequence of real numbers $\{s'_n\}$, there exists a subsequence
$\{s_n\}$ of $\{s'_n\}$ such that for some progress $\tilde z(t)$,
\begin{eqnarray}\label{z}
\lim_{n\rightarrow \infty} z(t+s_n)=\tilde{z}(t)\quad\hbox{and}\quad
\lim_{n\rightarrow \infty} \tilde{z}(t-s_n)=z(t)
\end{eqnarray}
for each $t\in \mathbb{R}$. In particular, if
$\lim_{n\to\infty}s'_n=\infty$, then $\tilde{z}(t)\equiv 0$ by the
first equality of \eqref{z}. Hence $z(t)\equiv 0$ by the second
equality of \eqref{z}, so we must have $x(a)=y(a)$, a contradiction.
The proof is complete.
\end{proof}

\section{The non-linear stochastic differential equations}
\setcounter{equation}{0}

Consider the following non-linear stochastic differential equation
\begin{equation}\label{dx}
\rmd x(t)=Ax(t)\rmd t+f(t,x(t))\rmd t+g(t,x(t))\rmd W(t),~~~~t \in
\mathbb{R},
\end{equation}
where $f: \mathbb{R}\times \mathcal{L}^2(\mathbf{P}, \mathbb{H})
\rightarrow \mathcal{L}^2(\mathbf{P}, \mathbb{H})$ and $g:
\mathbb{R}\times \mathcal{L}^2(\mathbf{P}, \mathbb{H}) \rightarrow
\mathcal{L}^2(\mathbf{P}, \mathbb{H})$, and $W(t)$ is a two-sided
standard one-dimensional Brown motion defined on the filtered
probability space $(\Omega, \mathcal{F},\mathbf{P},\mathcal{F}_t)$,
where $\mathcal F_t=\sigma\{W(u)-W(v);u,v\le t\}$.

As in previous section, we also assume that $A$ generates a
$\mathcal{C}_0$-semigroup $(T(t)_{t\geq 0})$, such that
\begin{equation}\label{ed}
\|T(t)\| \leq Ke^{-\omega t},~~~~\mbox{for all}~~~ t \geq 0
\end{equation}
with $K>0$, $\omega>0$.

\begin{de}\label{def}
An $\mathcal F_t$-progressively measurable stochastic process
$\{x(t)\}_{t\in \mathbb{R}}$ is called a {\em mild solution} of
(\ref{dx}) if it satisfies the corresponding stochastic integral
equation
\begin{eqnarray*}
x(t)=T(t-r)x(r)+\int_r^tT(t-s)f(s,x(s))\rmd
s+\int_r^tT(t-s)g(s,x(s))\rmd W(s),
\end{eqnarray*}
for all $t\ge r$ and each $r\in\mathbb{R}$.
\end{de}

\begin{thm}\label{thnon}
Assume $f$ and $g$  are square-mean almost automorphic processes in
$t\in \mathbb{R}$ for each $x\in \mathcal{L}^2(\mathbf{P},
\mathbb{H})$. Moreover $f$ and $g$ satisfy Lipschitz conditions in
$x$ uniformly for $t$, that is, for all $x,y\in
\mathcal{L}^2(\mathbf{P},\mathbb{H})$ and $t\in \mathbb{R}$,
\begin{equation*}
\mathbf{E}\|f(t,x)-f(t,y)\|^2\leq L\mathbf{E}\|x-y\|^2,
\end{equation*}
\begin{equation*}
\mathbf{E}\|g(t,x)-g(t,y)\|^2\leq L'\mathbf{E}\|x-y\|^2,
\end{equation*}
for constants $L,L'>0$. Then (\ref{dx}) has a unique square-mean
almost automorphic mild solution, provided
$\frac{2K^2L}{\omega^2}+\frac{K^2L'}{\omega}<1$.
\end{thm}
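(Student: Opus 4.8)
The plan is to recast \eqref{dx} as a fixed-point problem on the Banach space $AA(\mathbb{R};\mathcal{L}^2(\mathbf{P},\mathbb{H}))$ and invoke the contraction mapping principle. Define the operator $\mathcal{S}$ by
\[
(\mathcal{S}x)(t):=\int_{-\infty}^t T(t-s)f(s,x(s))\,\rmd s+\int_{-\infty}^t T(t-s)g(s,x(s))\,\rmd W(s),\qquad t\in\mathbb{R},
\]
where each improper integral is the $\mathcal{L}^2(\mathbf{P},\mathbb{H})$-limit of the corresponding integral over $[r,t]$ as $r\to-\infty$, exactly as in the proof of Theorem \ref{th1}. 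As there, splitting $\int_{-\infty}^t=\int_{-\infty}^r+\int_r^t$ and using that $T(t-r)(\mathcal{S}x)(r)=\int_{-\infty}^r T(t-s)f(s,x(s))\,\rmd s+\int_{-\infty}^r T(t-s)g(s,x(s))\,\rmd W(s)$ shows that $x=\mathcal{S}x$ if and only if $x$ is a mild solution of \eqref{dx} in the sense of Definition \ref{def}; so a unique fixed point of $\mathcal{S}$ in $AA(\mathbb{R};\mathcal{L}^2(\mathbf{P},\mathbb{H}))$ yields exactly the assertion.

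First I would check that $\mathcal{S}$ is well defined and maps $AA(\mathbb{R};\mathcal{L}^2(\mathbf{P},\mathbb{H}))$ into itself. For $x$ in this space, Theorem \ref{th} gives that $t\mapsto f(t,x(t))$ and $t\mapsto g(t,x(t))$ are again square-mean almost automorphic, hence bounded by Lemma \ref{lem}(3); together with \eqref{ed}, the Cauchy--Schwarz inequality for the $\rmd s$-integral and Ito's isometry for the $\rmd W$-integral, this shows both integrals converge and that $\mathcal{S}x$ is bounded and stochastically continuous. For the almost automorphy of $\mathcal{S}x$ one repeats the argument from the proof of Theorem \ref{th1}: given $\{s'_n\}$ pick $\{s_n\}$ along which $f(\cdot,x(\cdot))$ and $g(\cdot,x(\cdot))$ (and their backward shifts) converge to some $\widetilde f$, $\widetilde g$; let $(\widetilde{\mathcal{S}x})(t)$ be the same integral formula with $\widetilde f,\widetilde g$; and estimate $\mathbf{E}\|(\mathcal{S}x)(t+s_n)-(\widetilde{\mathcal{S}x})(t)\|^2$ by separating the deterministic convolution (handled by dominated convergence, using \cite[theorem 3.1]{Gue04}) from the stochastic one, for which one uses the shifted Wiener process $\widetilde W(\sigma):=W(\sigma+s_n)-W(s_n)$, the substitution $\sigma=s-s_n$, Ito's isometry, and the exponential decay \eqref{ed} to pass to the limit; the reverse convergence is identical.

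Next I would verify the contraction property. For $x,y\in AA(\mathbb{R};\mathcal{L}^2(\mathbf{P},\mathbb{H}))$, applying $\|a+b\|^2\le 2\|a\|^2+2\|b\|^2$ and then, for the deterministic term, the Cauchy--Schwarz bound $\bigl\|\int_{-\infty}^t T(t-s)\phi(s)\,\rmd s\bigr\|^2\le \tfrac{K}{\omega}\int_{-\infty}^t Ke^{-\omega(t-s)}\|\phi(s)\|^2\,\rmd s$ together with the Lipschitz estimate on $f$, and for the stochastic term Ito's isometry $\mathbf{E}\bigl\|\int_{-\infty}^t T(t-s)\psi(s)\,\rmd W(s)\bigr\|^2=\int_{-\infty}^t\|T(t-s)\|^2\,\mathbf{E}\|\psi(s)\|^2\,\rmd s$ together with the Lipschitz estimate on $g$ and \eqref{ed}, I obtain
\[
\mathbf{E}\|(\mathcal{S}x)(t)-(\mathcal{S}y)(t)\|^2\le\Bigl(\tfrac{2K^2L}{\omega^2}+\tfrac{K^2L'}{\omega}\Bigr)\|x-y\|_\infty^2
\]
for every $t\in\mathbb{R}$, hence $\|\mathcal{S}x-\mathcal{S}y\|_\infty\le\bigl(\tfrac{2K^2L}{\omega^2}+\tfrac{K^2L'}{\omega}\bigr)^{1/2}\|x-y\|_\infty$. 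Since the factor is strictly less than $1$ by hypothesis, $\mathcal{S}$ is a contraction on the Banach space $AA(\mathbb{R};\mathcal{L}^2(\mathbf{P},\mathbb{H}))$ and has a unique fixed point, which is the unique square-mean almost automorphic mild solution of \eqref{dx}.

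The step I expect to be the main obstacle is showing that $\mathcal{S}$ maps $AA(\mathbb{R};\mathcal{L}^2(\mathbf{P},\mathbb{H}))$ into itself, and in particular the square-mean almost automorphy of the stochastic convolution $\int_{-\infty}^t T(t-s)g(s,x(s))\,\rmd W(s)$: one must justify convergence of the improper Ito integral, carry out the Wiener-shift change of variables so that the shifted integrand is paired with $\rmd\widetilde W$, and control the tail $\int_{-\infty}^a$ uniformly in $n$, all of which hinge on the exponential dissipativity \eqref{ed}. Once these measurability and convergence points are in place, the contraction estimate is a routine computation.
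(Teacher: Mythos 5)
Your proposal is correct and follows essentially the same route as the paper: the same operator $\mathcal{S}$ on the Banach space $AA(\mathbb{R};\mathcal{L}^2(\mathbf{P},\mathbb{H}))$, invariance of that space via Theorem \ref{th} together with the convolution argument from the proof of Theorem \ref{th1} (including the Wiener-shift and Ito-isometry step), and the identical contraction estimate yielding the constant $\frac{2K^2L}{\omega^2}+\frac{K^2L'}{\omega}$. No substantive differences to report.
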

\begin{proof}
By definition \ref{def}, stochastic process $x:\mathbb
R\to\mathcal{L}^2(\mathbf{P},\mathbb{H})$ is a solution to
\eqref{dx} if and only if it satisfies the stochastic integral
equation
\begin{eqnarray*}
x(t)=T(t-r)x(r)+\int_r^tT(t-s)f(s,x(s))\rmd
s+\int_r^tT(t-s)g(s,x(s))\rmd W(s).
\end{eqnarray*}
If we let $r\to-\infty$ in above integral equation, by the
exponential dissipation condition of $T$, \eqref{ed}, then we obtain
that the stochastic process $x$ is a solution to \eqref{dx} if and
only if $x$ satisfies the stochastic integral equation
\begin{eqnarray*}
x(t)=\int_{-\infty}^tT(t-s)f(s,x(s))\rmd
s+\int_{-\infty}^tT(t-s)g(s,x(s))\rmd W(s).
\end{eqnarray*}

To seek the square-mean almost automorphic mild solution, let us
consider the nonlinear operator $\mathcal S$ acting on the Banach
space $AA(\mathbb R;\mathcal{L}^2(\mathbf{P},\mathbb{H}))$ given by
\begin{eqnarray*}
(\mathcal{S}x)(t):=\int_{-\infty}^tT(t-s)f(s,x(s))\rmd
s+\int_{-\infty}^tT(t-s)g(s,x(s))\rmd W(s).
\end{eqnarray*}
If we can show that the operator $\mathcal S$ maps $AA(\mathbb
R;\mathcal{L}^2(\mathbf{P},\mathbb{H}))$ into itself and it is a
contraction mapping, then by Banach fixed point theorem, we can
conclude that there is a unique square-mean almost automorphic mild
solution to the equation \eqref{dx}.

Let us consider the nonlinear operators $\mathcal S_1 x$ and
$\mathcal S_2 x$ acting on the Banach space $AA(\mathbb
R;\mathcal{L}^2(\mathbf{P},\mathbb{H}))$ given by
\begin{eqnarray*}
(\mathcal S_1 x)(t):=\int_{-\infty}^t T(t-s)f(s,x(s))\rmd s
\end{eqnarray*}
and
\begin{eqnarray*}
(\mathcal S_2 x)(t):=\int_{-\infty}^t T(t-s)g(s,x(s))\rmd W(s),
\end{eqnarray*}
respectively. By theorem \ref{th}, $F_1(t):=f(t,x(t))$ and
$F_2(t):=g(t,x(t))$ are square-mean almost automorphic if $x$ is,
then by the proof of theorem \ref{th1}, we know that $\mathcal S_1
x$ and $\mathcal S_2 x$ are square-mean almost automorphic if $F_1$
and $F_2$ are. That is, the operator $\mathcal S$ maps $AA(\mathbb
R;\mathcal{L}^2(\mathbf{P},\mathbb{H}))$ into itself.

Next we show that $\mathcal S$ is a contraction mapping on
$AA(\mathbb R;\mathcal{L}^2(\mathbf{P},\mathbb{H}))$. For
$x_1,x_2\in AA(\mathbb R;\mathcal{L}^2(\mathbf{P},\mathbb{H}))$, and
each $t\in\R$ we have
\begin{eqnarray*}
\mathbf{E}\|(\mathcal{S}x_1)(t)-(\mathcal{S}x_2)(t)\|^2&=&\mathbf{E}\|\int_{-\infty}^tT(t-s)[f(s,x_1(s))-f(s,x_2(s))]\rmd s\\
&&+\int_{-\infty}^tT(t-s)[g(s,x_1(s))-g(s,x_2(s))]\rmd W(s)\|^2\\
&\leq& 2K^2\mathbf{E}\left(\int_{-\infty}^te^{-\omega(t-s)}\|f(s,x_1(s))-f(s,x_2(s))\|\rmd s\right)^2\\
&&+2\mathbf{E}\left\|\int_{-\infty}^tT(t-s)[g(s,x_1(s))-g(s,x_2(s))]\rmd
W(s)\right\|^2.
\end{eqnarray*}
We first evaluate the first term of the right-hand side by
Cauchy-Schwarz inequality  as follows:
\begin{eqnarray*}
&&\mathbf{E}\left(\int_{-\infty}^te^{-\omega(t-s)}\|f(s,x_1(s))-f(s,x_2(s))\|\rmd s\right)^2\\
&=&\mathbf{E}\left(\int_{-\infty}^t(e^{\frac{-\omega(t-s)}{2}})(e^{\frac{-\omega(t-s)}{2}})\|f(s,x_1(s))-f(s,x_2(s))\|\rmd s\right)^2\\
&\leq&\mathbf{E}\left[\left(\int_{-\infty}^te^{-\omega(t-s)}\rmd
s\right)\left(\int_{-\infty}^te^{-\omega(t-s)}\|f(s,x_1(s))
       -f(s,x_2(s))\|^2\rmd s\right)\right]\\
&\leq& \left(\int_{-\infty}^te^{-\omega(t-s)}\rmd s\right)\left(\int_{-\infty}^te^{-\omega(t-s)}\mathbf{E}\|f(s,x_1(s))-f(s,x_2(s))\|^2\rmd s\right)\\
&\leq& L\cdot\left(\int_{-\infty}^te^{-\omega(t-s)}\rmd s\right)\left(\int_{-\infty}^te^{-\omega(t-s)}\mathbf{E}\|x_1(s)-x_2(s)\|^2\rmd s\right)\\
&\leq& L\cdot\left(\int_{-\infty}^te^{-\omega(t-s)}\rmd
s\right)^2\sup_{s\in
\mathbb{R}}\mathbf{E}\|x_1(s)-x_2(s)\|^2\\
&\leq& \frac{L}{\omega^2}\cdot\sup_{s\in
\mathbb{R}}\mathbf{E}\|x_1(s)-x_2(s)\|^2.
\end{eqnarray*}
As to the second term, by the Ito's isometry property of stochastic
integral, we have
\begin{eqnarray*}
&&\mathbf{E}\left\|\int_{-\infty}^tT(t-s)[g(s,x_1(s))-g(s,x_2(s))]\rmd W(s)\right\|^2\\
&=&\mathbf{E}\left[\int_{-\infty}^t\|T(t-s)[g(s,x_1(s))-g(s,x_2(s))]\|^2\rmd s\right]\\
&\leq& \mathbf{E}\left[\int_{-\infty}^t\|T(t-s)\|^2\|g(s,x_1(s))-g(s,x_2(s))\|^2\rmd s\right]\\
&\leq& K^2\int_{-\infty}^te^{-2\omega(t-s)}\mathbf{E}\|g(s,x_1(s))-g(s,x_2(s))\|^2\rmd s\\
&\leq& K^2L'\cdot\left(\int_{-\infty}^te^{-2\omega(t-s)}\rmd
s\right)\sup_{s\in
\mathbb{R}}\mathbf{E}\|x_1(s)-x_2(s)\|^2\\
&\leq& \frac{K^2L'}{2\omega}\cdot\sup_{s\in
\mathbb{R}}\mathbf{E}\|x_1(s)-x_2(s)\|^2.
\end{eqnarray*}
Thus, it follows that, for each $t\in\R$,
\begin{eqnarray*}
\mathbf{E}\|(\mathcal{S}x_1)(t)-(\mathcal{S}x_2)(t)\|^2\leq
(\frac{2K^2L}{\omega^2}+\frac{K^2L'}{\omega})\cdot\sup_{s\in
\mathbb{R}}\mathbf{E}\|x_1(s)-x_2(s)\|^2,
\end{eqnarray*}
that is,
\begin{equation}\label{equ1}
\|(\mathcal{S}x_1)(t)-(\mathcal{S}x_2)(t)\|_2^2\le\eta\cdot\sup_{s\in
\mathbb{R}}\|x_1(s)-x_2(s)\|_2^2
\end{equation}
with $\eta:=\frac{2K^2L}{\omega^2}+\frac{K^2L'}{\omega}$. Note that
\begin{equation}\label{equ2}
\sup_{s\in \mathbb{R}}\|x_1(s)-x_2(s)\|_2^2\le (\sup_{s\in
\mathbb{R}}\|x_1(s)-x_2(s)\|_2)^2,
\end{equation}
and \eqref{equ1} together with \eqref{equ2} gives, for each
$t\in\mathbb R$,
\[
\|\mathcal S(x_1)(t)-\mathcal
S(x_2)(t)\|_2\le\sqrt{\eta}~\|x_1-x_2\|_{\infty}.
\]
Hence
\[
\|\mathcal{S}x_1-\mathcal{S}x_2\|_{\infty}=\sup_{t\in
\mathbb{R}}\|\mathcal S(x_1)(t)-\mathcal
S(x_2)(t)\|_2\le\sqrt{\eta}~\|x_1-x_2\|_{\infty}.
\]
Since $\eta<1$, it follows that $\mathcal{S}$ is a contraction
mapping on $AA(\mathbb R;\mathcal{L}^2(\mathbf{P},\mathbb{H}))$.
Therefore, there exists a unique $v\in AA(\mathbb
R;\mathcal{L}^2(\mathbf{P},\mathbb{H}))$ such that $\mathcal{S}v=v$,
which is the unique solution to \eqref{dx}. The proof is now
complete.
\end{proof}

\begin{rem} If $f$ and $g$ in \eqref{dt} and \eqref{dx} are almost periodic in $t$, then the unique
square-mean almost automorphic solution obtained in theorems
\ref{th1} and \ref{thnon} is actually almost periodic, see
\cite{AT,BD,BD1,Pra95,Hal,Tud}.
\end{rem}

\section{Stability of the unique square-mean almost automorphic solution}
\setcounter{equation}{0}

In previous section, for the non-linear stochastic differential
equation \eqref{dx}, we obtain that it has a unique square-mean
almost automorphic solution. In this section, we will show that the
unique square-mean almost automorphic solution is asymptotically
stable in square-mean sense and that any other solutions converge to
it exponentially fast. Firstly, let us state the definition of
asymptotic stability.

\begin{de}\label{sta}
The unique square-mean almost automorphic solution $x_{aa}(t)$ of
(\ref{dx}) is said to be {\em stable in square-mean sense}, if for
arbitrary $\epsilon>0$, there exists $\delta>0$ such that
\begin{eqnarray*}
\mathbf{E}\|x_c(t)-x_{aa}(t)\|^2<\epsilon, \quad t\ge 0
\end{eqnarray*}
whenever $\|c-x_{aa}(0)\|^2<\delta$, where $x_c(t)$ stands for the
solution of (\ref{dx}) with initial condition $x_c(0)=c$. The
solution $x_{aa}(t)$ is said to be {\em asymptotically stable in
square-mean sense} if it is stable in square-mean sense and
\begin{equation*}
\lim_{t\to \infty}\mathbf{E}\|x_c(t)-x_{aa}(t)\|^2=0.
\end{equation*}
\end{de}

\begin{thm}
Assume that the assumptions of theorem \ref{thnon} hold, then the
unique square-mean almost automorphic solution $x_{aa}(t)$ of
\eqref{dx} is asymptotically stable in square-mean sense.
\end{thm}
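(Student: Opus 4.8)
The plan is to control $\phi(t):=\mathbf{E}\|x_c(t)-x_{aa}(t)\|^2$ directly, deriving a Gronwall-type integral inequality whose resolution gives exponential decay; asymptotic stability is then immediate. Under the Lipschitz hypotheses of Theorem \ref{thnon} the mild solution $x_c$ with $x_c(0)=c$ exists and is unique on $[0,\infty)$ by standard arguments, and both $x_c$ and $x_{aa}$ satisfy the integral equation of Definition \ref{def}; subtracting them, the difference $z(t):=x_c(t)-x_{aa}(t)$ obeys, for every $t\ge0$,
\begin{eqnarray*}
z(t)&=&T(t)(c-x_{aa}(0))+\int_0^tT(t-s)[f(s,x_c(s))-f(s,x_{aa}(s))]\rmd s\\
&&+\int_0^tT(t-s)[g(s,x_c(s))-g(s,x_{aa}(s))]\rmd W(s).
\end{eqnarray*}

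From here I would take $\mathbf{E}\|\cdot\|^2$ of this identity. First I would peel off the semigroup term by $\|a+b\|^2\le(1+\var^{-1})\|a\|^2+(1+\var)\|b\|^2$ with $a=T(t)(c-x_{aa}(0))$ and $b$ the sum of the two integrals, and then split that sum into its drift and diffusion parts exactly as in the proof of Theorem \ref{thnon}, via $\|p+q\|^2\le2\|p\|^2+2\|q\|^2$. Estimating the semigroup term by \eqref{ed}, the deterministic integral by the Cauchy--Schwarz inequality together with the Lipschitz bound on $f$ (as for the first term in Theorem \ref{thnon}), and the stochastic integral by the Ito isometry together with the Lipschitz bound on $g$, one is led to
\begin{eqnarray*}
\phi(t)&\le&(1+\var^{-1})K^2\rme^{-2\omega t}\phi(0)+\frac{2(1+\var)K^2L}{\omega}\int_0^t\rme^{-\omega(t-s)}\phi(s)\rmd s\\
&&+2(1+\var)K^2L'\int_0^t\rme^{-2\omega(t-s)}\phi(s)\rmd s,
\end{eqnarray*}
valid for all $t\ge0$, with a free parameter $\var>0$.

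The heart of the argument is to turn this into an exponential bound using only the hypothesis $\eta:=\frac{2K^2L}{\omega^2}+\frac{K^2L'}{\omega}<1$. I would fix a small $\gamma\in(0,\omega)$ and set $\Phi(t):=\sup_{0\le\tau\le t}\rme^{\gamma\tau}\phi(\tau)$, which is finite for each $t$ because a mild solution has locally bounded second moments. Multiplying the previous inequality by $\rme^{\gamma\tau}$ at a time $\tau\le t$, rewriting $\rme^{\gamma\tau}\rme^{-\omega(\tau-s)}=\rme^{-(\omega-\gamma)(\tau-s)}\rme^{\gamma s}$ (and likewise with $2\omega$), bounding the resulting convolutions by $\frac{1}{\omega-\gamma}\Phi(t)$ and $\frac{1}{2\omega-\gamma}\Phi(t)$, and using $\rme^{(\gamma-2\omega)\tau}\le1$, one obtains
\[
\Phi(t)\le(1+\var^{-1})K^2\phi(0)+\left(\frac{2(1+\var)K^2L}{\omega(\omega-\gamma)}+\frac{2(1+\var)K^2L'}{2\omega-\gamma}\right)\Phi(t).
\]
The bracketed coefficient tends to $\frac{2K^2L}{\omega^2}+\frac{K^2L'}{\omega}=\eta<1$ as $(\var,\gamma)\to(0,0)$, so I would fix $\var,\gamma>0$ small enough that it is $<1$; dividing by $1$ minus that coefficient then gives $\Phi(t)\le M\phi(0)$ for a constant $M$, that is,
\[
\mathbf{E}\|x_c(t)-x_{aa}(t)\|^2\le M\,\rme^{-\gamma t}\,\mathbf{E}\|c-x_{aa}(0)\|^2,\qquad t\ge0.
\]
Given $\epsilon>0$, taking $\delta:=\epsilon/M$ makes the right side $<\epsilon$ for all $t\ge0$ whenever $\mathbf{E}\|c-x_{aa}(0)\|^2<\delta$, which is stability in square-mean sense; and $\rme^{-\gamma t}\to0$ as $t\to\infty$ gives $\lim_{t\to\infty}\mathbf{E}\|x_c(t)-x_{aa}(t)\|^2=0$. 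Hence $x_{aa}$ is asymptotically stable in square-mean sense, and in fact every solution of \eqref{dx} approaches it at the exponential rate $\gamma$.

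The step I expect to be delicate is precisely this bookkeeping of constants. A coarser route --- estimating $\|a+b+c\|^2\le3(\|a\|^2+\|b\|^2+\|c\|^2)$ and then bounding $\rme^{-2\omega(t-s)}\le\rme^{-\omega(t-s)}$ before applying the classical Gronwall lemma --- closes only under a smallness condition strictly stronger than $\eta<1$; it is the combination of the $\var$-weighted splitting of the initial term (so that its coefficient tends to $1$) with the separate, sharper weight $\frac{1}{2\omega-\gamma}$ retained for the diffusion term in the weighted-supremum estimate that makes the hypothesis of Theorem \ref{thnon} exactly sufficient. The only other point to record explicitly is that $\Phi(t)<\infty$ for each $t$, needed before the division above, which is part of what ``mild solution'' entails here.
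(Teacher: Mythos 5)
Your proof is correct, and it takes a genuinely different --- and in fact sharper --- route than the paper's. The paper follows exactly the ``coarser route'' you flag at the end: it splits the three terms by $\|a+b+c\|^2\le 3(\|a\|^2+\|b\|^2+\|c\|^2)$, bounds $\rme^{-2\omega(t-s)}\le\rme^{-\omega(t-s)}$, arrives at $Y(t)\le 3K^2\rme^{-\omega t}Y(0)+k\int_0^t\rme^{-\omega(t-s)}Y(s)\,\rmd s$ with $k=3K^2\hat L^2(1+\frac1\omega)$, and concludes exponential decay from the comparison ODE provided $k<\omega$, asserting that $\omega^2>3K^2\hat L^2(\omega+1)$ ``always holds.'' It does not: take $K=1$, $\omega=100$, $L=L'=50$; then $\eta=0.01+0.5<1$ so the hypothesis of Theorem \ref{thnon} is satisfied, yet $3\hat L^2(\omega+1)=757500>\omega^2$. (The paper also applies the Lipschitz condition pathwise, $\|f(s,y)-f(s,x)\|\le L\|y-x\|$, which does not follow from the mean-square Lipschitz hypothesis and is the source of the spurious $\hat L^2$; but even with $\hat L$ in place of $\hat L^2$ the same counterexample shows the coarse route needs a strictly stronger smallness condition, exactly as you predicted.) Your argument --- the $\var$-weighted splitting of the initial term combined with the weighted supremum $\Phi(t)=\sup_{0\le\tau\le t}\rme^{\gamma\tau}\phi(\tau)$, retaining the sharper weight $1/(2\omega-\gamma)$ for the diffusion convolution --- yields a contraction coefficient tending to exactly $\eta$ as $(\var,\gamma)\to(0,0)$, so it closes under precisely the stated hypothesis and actually repairs the paper's proof. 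The only loose ends, which you correctly flag as routine, are the local boundedness of $\mathbf{E}\|x_c(t)\|^2$ (so that $\Phi(t)<\infty$ before dividing) and the existence and uniqueness of $x_c$ on $[0,\infty)$.
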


\begin{proof} We actually will prove more general result. Assume
that $x(t)$ and $y(t)$ are two solutions of \eqref{dx} with initial
values $x(0)$ and $y(0)$, respectively. Note that, by the
exponential dissipation of $T(t)$, we have
\begin{eqnarray}
\|y(t)-x(t)\|^2&=&\|T(t)[y(0)-x(0)]+\int_0^tT(t-s)[f(s,y(s))-f(s,x(s))]\rmd s\nonumber\\
&&+\int_0^tT(t-s)[g(s,y(s))-g(s,x(s))]\rmd W(s)\|^2\nonumber\\
&\leq& 3K^2e^{-2\omega t}\|y(0)-x(0)\|^2+3\left[\int_0^tKe^{-\omega(t-s)}\|f(s,y(s))-f(s,x(s))\|\rmd s\right]^2\nonumber\\
&&+3\left[\int_0^tKe^{-\omega(t-s)}[g(s,y(s))-g(s,x(s))]\rmd
W(s)\right]^2\label{e1}
\end{eqnarray}
Since $f$ satisfies Lipschitz condition, using Cauchy-Schwarz
inequality, we have
\begin{eqnarray}
&&3\left[\int_0^tKe^{-\omega(t-s)}\|f(s,y(s))-f(s,x(s))\|\rmd s\right]^2\nonumber\\
&\leq& 3K^2L^2\left(\int_0^te^{-\omega(t-s)}\|y(s)-x(s)\|\rmd s\right)^2\nonumber\\
&=& 3K^2L^2\left(\int_0^t (e^{\frac{-\omega(t-s)}{2}})(e^{\frac{-\omega(t-s)}{2}})\|y(s)-x(s)\|\rmd s\right)^2\nonumber\\
&\leq& 3K^2L^2(\int_0^te^{-\omega(t-s)}ds)(\int_0^te^{-\omega(t-s)}\|y(s)-x(s)\|^2ds)\nonumber\\
&\leq&
\frac{3K^2L^2}{\omega}\int_0^te^{-\omega(t-s)}\|y(s)-x(s)\|^2\rmd
s,\label{e2}
\end{eqnarray}
for all $t \geq 0$. By Ito's isometry property of stochastic
integral, it follows that
\begin{eqnarray}
&&3\left[\int_0^tKe^{-\omega(t-s)}[g(s,y(s))-g(s,x(s))]\rmd
W(s)\right]^2\nonumber\\
&\leq& 3K^2L'^2\int_0^te^{-2\omega(t-s)}\|y(s)-x(s)\|^2\rmd
s\label{e3}
\end{eqnarray}
for all $t \geq 0$.

Let $Y(t):=\mathbf{E}\|y(t)-x(t)\|^2$ and $k:=3K^2\hat
L^2(1+\frac{1}{\omega})$ with $\hat L:=\max\{L,L'\}$. Note that
$e^{-2\omega t} \leq e^{-\omega t} $ for $t \geq 0$ and by
\eqref{e1}, \eqref{e2}, \eqref{e3}, we have
\begin{eqnarray}\label{yt}
Y(t)\leq 3K^2e^{-\omega t}Y(0)+ k\int_0^t e^{-\omega (t-s)}Y(s)\rmd
s.
\end{eqnarray}
The $Y(t)$ in inequality \eqref{yt} can be controlled by $\widetilde
Y(t)$, which satisfies
\begin{eqnarray*}
\dot{\widetilde Y}(t)&=& -\omega\widetilde Y(t) + k\widetilde Y(t)\\
\widetilde Y(0)&=&3K^2Y(0).
\end{eqnarray*}
Hence $\widetilde Y(t)\rightarrow 0$ exponentially fast if $-\omega
+k<0$, that is
\begin{equation}\label{yt2}
\omega >3K^2\hat L^2(1+\frac{1}{\omega}).
\end{equation}
Note that \eqref{yt2} holds if and only if
\begin{equation*}
\omega^2>3K^2\hat L^2(\omega+1),
\end{equation*}
which always holds. Therefore, $Y(t)$ converges to $0$ exponentially
fast.

In particular, if we set $x(t)=x_{aa}(t)$ in above arguments, we
obtain that the unique square-mean almost automorphic solution
$x_{aa}(t)$ of \eqref{dx} is asymptotically stable in square-mean
sense. The proof is complete.
\end{proof}

\begin{rem}
By the proof of the stability theorem, we actually obtain that any
solution of \eqref{dx} (and hence \eqref{dt}) is asymptotically
stable in square-mean sense. 
\end{rem}

\noindent{\bf Acknowledgment:} The authors are indebted to professor
Yong Li for his encouragement and helpful discussions. The authors
would like to thank the anonymous referee for his/her very careful
reading the manuscript and valuable suggestions which greatly
improved the paper.


\begin{thebibliography}{xx}

\bibitem{Arn}
L. Arnold, Stochastic differential equations: theory and
applications. Translated from the German. Wiley-Interscience [John
Wiley \& Sons], New York-London-Sydney, 1974. xvi+228 pp.

\bibitem{AT}
L. Arnold and C. Tudor, Stationary and almost periodic solutions of
almost periodic affine stochastic differential equations. {\it
Stochastics Stochastics Rep.} {\bf 64}  (1998),  177--193.

\bibitem{BD}
P. H. Bezandry, T. Diagana, Existence of almost periodic solutions
to some stochastic differential equations. {\it Appl. Anal.}  {\bf
86} (2007), 819--827.

\bibitem{BD1}
P. H. Bezandry, T. Diagana, Square-mean almost periodic solutions
nonautonomous stochastic differential equations. {\it Electron. J.
Differential Equations}  2007, No. 117, 10 pp.

\bibitem{Boc}
S. Bochner, Curvature and Betti numbers in real and complex vector
bundles.  {\it Univ. e Politec. Torino. Rend. Sem. Mat.} {\bf 15}
(1955--56), 225--253.

\bibitem{Pra95}
G. Da Prato and C. Tudor, Periodic and almost periodic solutions for
semilinear stochastic equations.  {\it Stochastic Anal. Appl.} {\bf
13} (1995), 13--33.

\bibitem{Gue01}
G. M. N'Gu\'er\'ekata, Almost automorphic and almost periodic
functions in abstract spaces. Kluwer Academic/Plenum Publishers, New
York, 2001. x+138 pp.

\bibitem{Gue04}
G. M. N'Gu\'er\'ekata, Existence and uniqueness of almost
automorphic mild solutions to some semilinear abstract differential
equations.  {\it Semigroup Forum} {\bf 69}  (2004), 80--86.

\bibitem{Hal}
A. Halanay,  Periodic and almost periodic solutions to affine
stochastic systems.  Proceedings of the Eleventh International
Conference on Nonlinear Oscillations (Budapest, 1987),  94--101,
J\'anos Bolyai Math. Soc., Budapest, 1987.


\bibitem{Joh}
R. A. Johnson, A linear, almost periodic equation with an almost
automorphic solution. {\it Proc. Amer. Math. Soc.} {\bf 82}  (1981),
199--205.

\bibitem{Oks}
B. {\O}ksendal, Stochastic differential equations. An introduction
with applications. Sixth edition. Universitext. Springer-Verlag,
Berlin, 2003. xxiv+360 pp.

\bibitem{SY}
W. Shen and Y. Yi, Almost automorphic and almost periodic dynamics
in skew-product semiflows.  {\it Mem. Amer. Math. Soc.} {\bf 136}
(1998), no. 647, x+93 pp.

\bibitem{Tud}
C. Tudor, Almost periodic solutions of affine stochastic evolution
equations.  {\it Stochastics Stochastics Rep.} {\bf 38} (1992),
251--266.

\bibitem{Tudd}
C. A. Tudor and M. Tudor,  Pseudo almost periodic solutions of some
stochastic differential equations. {\it  Math. Rep. (Bucur.)} {\bf
1}(51) (1999),  305--314.

\bibitem{Vee65}
W. A. Veech, Almost automorphic functions on groups.  {\it Amer. J.
Math.} {\bf 87} (1965) 719--751.

\bibitem{Vee77}
W. A. Veech,  Topological dynamics. { \it Bull. Amer. Math. Soc.}
{\bf 83} (1977), 775--830.

\end{thebibliography}
\end{document}